\documentclass[12pt,reqno]{amsart}

\usepackage{setspace}
\usepackage{enumitem}
\usepackage{tikz-cd}
\usepackage{tikz}
\usetikzlibrary{positioning} 

\usepackage[linktocpage=true]{hyperref}

\makeatletter
\@namedef{subjclassname@2020}{%
  \textup{2020} Mathematics Subject Classification}
\makeatother
\newtheorem*{theorem*}{Theorem A}
\newtheorem*{theorem**}{Theorem B}

\usepackage[T1]{fontenc}
\usepackage{ae,aecompl}
\usepackage{graphics,cancel,graphicx}
\usepackage{epsfig,psfrag,manfnt}
\usepackage{mathrsfs}
\usepackage{graphicx,mathabx}
\usepackage{bbm,subfigure,rotating,bm,float,mathdots,wasysym,scalerel}

\newlength{\wdth}

\usepackage{stmaryrd}

\usepackage[all,cmtip]{xy}
\usepackage{amssymb,amsmath,amsfonts,amsthm,color}

\usepackage{scalerel,stackengine}
\stackMath
\newcommand\reallywidehat[1]{%
\savestack{\tmpbox}{\stretchto{%
  \scaleto{%
    \scalerel*[\widthof{\ensuremath{#1}}]{\kern-.6pt\bigwedge\kern-.6pt}%
    {\rule[-\textheight/2]{1ex}{\textheight}}
  }{\textheight}%
}{0.5ex}}%
\stackon[1pt]{#1}{\tmpbox}%
}

\makeatletter
\newcommand{\oamp}{
\!\mathbin{\mathpalette\make@circled{\raisebox{0.1em}{\scaleobj{0.51}{\&}}}}\!}
\newcommand{\make@circled}[2]{%
  \ooalign{$\m@th#1\smallbigcirc{#1}$\cr\hidewidth$\m@th#1#2$\hidewidth\cr}%
}
\newcommand{\smallbigcirc}[1]{%
  \vcenter{\hbox{\scalebox{0.77778}{$\m@th#1\bigcirc$}}}%
}
\makeatother

\newcommand{\calD}{\mathcal{D}}

\newcommand{\calL}{\mathcal{L}}

\newcommand{\calS}{\mathcal{S}}

\newcommand{\mC}{\mathbb{C}}

\newcommand{\mN}{\mathbb{N}}

\newcommand{\mR}{\mathbb{R}}

\newtheorem{theorem}{Theorem}[section]

\newtheorem{proposition}[theorem]{Proposition}

\theoremstyle{definition}

\newtheorem{remark}[theorem]{Remark}

\theoremstyle{definition}

\theoremstyle{definition}

\theoremstyle{definition}

\begin{document}

\keywords{linear constant coefficient homogeneous partial differential equation, distributional solution}

\subjclass[2020]{Primary 35E20; Secondary 46F10}

\title[]{On the concatenability of solutions of partial differential equations}

\author[]{Sara Maad  Sasane}
\address{Centre for Mathematical Sciences \\ Lund University \\
    Box 118\\ \phantom{22100Lund} 221~00~Lund \\ Sweden}
\email{Sara.Maad$\_$Sasane@math.LTH.se}

\author[]{Amol Sasane}
\address{Department of Mathematics \\London School of Economics\\
     Houghton Street\\ London WC2A 2AE\\ United Kingdom}
\email{A.J.Sasane@LSE.ac.uk}
 
\maketitle

\vspace{-1cm}
   
\begin{abstract} 
Let $\calD'(\mR^d)$ denote the space of  distributions on $\mR^d$. 
For a  linear partial different equation $p(\frac{\partial\hfill}{\partial x_1},\cdots, \frac{\partial\hfill}{\partial x_d}, \frac{\partial\hfill}{\partial t}) u=0$ (briefly $D_pu\!=\!0$) corresponding to a polynomial $p\in \mC[\xi_1,\cdots\!, \xi_d,\tau]$, let $S_p\!:=\!\{u\in C(\mR, \calD'(\mR^d)):D_pu\!=\!0\}$. The set $S_p$ has the `concatenability property' if whenever $u_1,u_2\in S_p\cap C^1(\mR, \calD'(\mR^d))$ are such that $u_1(0)\!=\!u_2(0)$, their concatenation $u_1\oamp u_2$ (defined to be $u_1(t)$ for $t\!\le\! 0$, and $u_2(t)$ for $t\!\ge\! 0$) belongs to $S_p$. It is shown that for $p\!=\!a_0+a_1\tau+\cdots+a_{d}\tau^{d}\in \mC[\xi_1,\cdots, \xi_d][\tau]$, where $a_0,\cdots, a_{d}\in  \mC[\xi_1,\cdots, \xi_d]$ and $d\in \mN$, $S_p$ has the concatenation property if and only if $d\!=\!1$. 
\end{abstract}

\section{Introduction}

\noindent The aim of this article is to show that the set of solutions of a single scalar homogeneous linear partial different equation with constant coefficients has the concatenability property (roughly whenever $u_1, u_2$ are solutions that match at $0$, the trajectory $u_1\oamp u_2$ whose past matches with $u_1$ and future with $u_2$ is also a solution) 
if and only if the polynomial describing the differential equation is first order in $\frac{\partial\hfill}{\partial t}$. 

The motivation for considering this problem is two-fold. Firstly, it arises from the discussion of the problem of understanding the `state' and `state construction' for a control theoretic model described by a system of linear partial differential equations with constant coefficients given in \cite{Wil}. While `time' does not play a distinguished role among the independent variables at hand in \cite{Wil}, it does so in the present article\footnote{The second author had posed the question considered in this article in a private conversation with Professor Jan Willems when he was a PhD student at the Bernoulli Institute, University of Groningen in 2000.}, and generalises to partial differential equations (PDE) the ordinary differential equation (ODE) case, viewing models as `evolution' equations. In the ordinary differential equation context, the state construction problem was considered \cite{RapWil} (where even {\em system} of ODEs were considered, while we restrict ourselves just to the {\em scalar} case in the PDE case as a first simple step). See also, for e.g., \cite{RocWil},  \cite{vanRap}, \cite{Zer} for various other avatars of this problem. We also remark that within  multidimensional behavioural theory of Willems, the notions where the independent variable of time plays a distinguished role (for instance in the concepts of controllability and autonomy) were introduced by the second author (see, for example, \cite{Sas0}, \cite{Sas00}, \cite{CotSas}, \cite{SasThoWil}), and there has been revived interest in such time-relevant notions (see, for instance, \cite{NapRapRoc}, \cite{ObeSch}). 
In the ODE context, the `concatenability property' was referred to as the `Markovian property', but to avoid confusion with the terminology in \cite{Wil} in the PDE context, we prefer to use the name `concatenability property' in both the PDE and ODE contexts. Besides the motivation arising from control theory, another motivation is that the problem we consider also fits within the general framework of algebraic-analysis of partial differential equations where one tries to characterise {\em analytic} properties of the solution space $S_p=\{u:p(\frac{\partial\hfill}{\partial x_1},\cdots, \frac{\partial\hfill}{\partial x_d}, \frac{\partial\hfill}{\partial t}) u=0\}$ of a PDE  {\em algebraically} via properties of the describing polynomial $p\in \!\mC[\xi_1,\cdots\!, \xi_d,\tau]$, for instance, the classically studied `division problems' (see, e.g., \cite{Hor0}, \cite{Loj} and also \cite{SasWag}), the problem of existence of null solutions (see, e.g., \cite{Hor}, \cite{Sas}), etc.

The organisation of this short note is as follows. In Section~\ref{section_1}, we present Theorem~\ref{one_variable_thm}, which treats the concatenability property in the case of an ordinary differential equation.  To treat the partial differential equation case, we need some preliminaries on vector-valued functions and distributions, which are given in Section~\ref{section_2}. Finally, in Section~\ref{section_3}, we will show Theorem~\ref{PDE_thm}, which handles the concatenability property for partial differential equations.

\section{The ordinary differential equation case}
\label{section_1}

 \noindent Let $\calD'(\mR)$ denote the space of  distributions on $\mR$, and $\calD(\mR)$ the space of compactly supported complex-valued infinitely many times differentiable functions on $\mR$. The action of $T\in \calD'(\mR)$ on a test function $\varphi\in \calD(\mR)$ will be denoted by $\langle T, \varphi\rangle$.  Let $\delta\in \calD'(\mR)$ denote the Dirac distribution $\langle \delta ,\varphi \rangle=\varphi(0)$. The successive distributional derivatives of $\delta$ are denoted by $\delta', \delta'', \delta'''$ and by $\delta^{(k)}$ for $k\ge 4$.
 
 Consider a single scalar linear, constant coefficient, homogeneous ordinary differential equation. 
The corresponding differential operator  is then described by a polynomial $p\in \mC[t]$, with   the indeterminate $t$ replacing the derivative map $\frac{d\hfill}{dt}:\calD'(\mR)\to \calD'(\mR)$. Let $S_p$ denote the set of continuous solutions, that is, 
$$
\textstyle
S_p=\{ u\in C(\mR): p(\frac{d\hfill}{dt})u=0\text{ in the sense of distributions}\}.
$$
If $u_1,u_2\in S_p\cap C^1(\mR)$ and $u_1(0)\!=\!u_2(0)$, then we define their {\em concatenation} 
$u_1\oamp  u_2 \in C(\mR)$ by 
$$
\textstyle 
(u_1\oamp u_2)(t)=\Big\{\! \begin{array}{l} 
u_1(t)  \;\text{ if }\; t\!\le\! 0,\\
u_2(t) \;\text{ if } \; t\! \ge\!  0.
\end{array}
$$
We say that $S_p$ has the {\em concatenability property} if whenever $u_1,u_2\in S_p\cap C^1(\mR)$, their concatenation $u_1\oamp u_2$ belongs to $S_p$. 

\begin{theorem}
\label{one_variable_thm}
Let $p\in \mC[t]$ be a nonconstant polynomial. 

\noindent Then $S_p$ has the concatenability property if and only if $\deg p=1$.
\end{theorem}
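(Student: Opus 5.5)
The plan is to compute the distributional derivatives of the concatenation $w:=u_1\oamp u_2$ explicitly, using the jump formula. Write $H$ for the Heaviside function. Since $u_1,u_2$ are smooth solutions of the ODE, they are classical solutions, being finite combinations of $t^k e^{\lambda t}$. We have $w=(1-H)u_1+Hu_2$ on $\mR$. Because $u_1(0)=u_2(0)$, the distributional derivative is
$$
w'=(1-H)u_1'+Hu_2',
$$
with no $\delta$ term. Differentiating again produces jump terms. Writing $c_k:=u_2^{(k)}(0)-u_1^{(k)}(0)$ (so $c_0=0$), an induction gives, for $n\ge 1$,
$$
w^{(n)}=(1-H)u_1^{(n)}+Hu_2^{(n)}+\sum_{k=1}^{n-1} c_k\,\delta^{(n-1-k)} .
$$
Hence, if $p(t)=\sum_{n=0}^N a_n t^n$ with $a_N\neq 0$, then
$$
p\!\left(\tfrac{d}{dt}\right)w=(1-H)\,p\!\left(\tfrac{d}{dt}\right)u_1+H\,p\!\left(\tfrac{d}{dt}\right)u_2+\sum_{n=1}^N a_n\sum_{k=1}^{n-1}c_k\,\delta^{(n-1-k)} .
$$
The first two terms vanish because $u_1,u_2$ solve the equation.

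For the case $\deg p=1$, the double sum is empty (it would need $n\ge 2$). So $p(\frac{d}{dt})w=0$, and $w\in C(\mR)$ because the two pieces agree at $0$. This gives $w\in S_p$, and the concatenability property holds.

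For the case $N=\deg p\ge 2$, I would exhibit a counterexample. Choose $u_1=0$. Choose $u_2$ to be the unique classical solution with initial data $u_2(0)=0$, $u_2'(0)=1$, and $u_2^{(k)}(0)=0$ for $2\le k\le N-1$. This exists since the ODE has order $N\ge 2$, and it is $C^\infty$, so it lies in $S_p\cap C^1(\mR)$ with $u_1(0)=u_2(0)$. Then $c_1=1$ and $c_k=0$ for $2\le k\le N-1$. The jump term reduces to $\sum_{n=2}^N a_n\delta^{(n-2)}$. Its top-order coefficient, that of $\delta^{(N-2)}$, is $a_N\neq0$. Since $\delta,\delta',\dots,\delta^{(N-2)}$ are linearly independent, this distribution is nonzero. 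Thus $w\notin S_p$, and concatenability fails.

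The only point needing care is the jump formula for $w^{(n)}$, in particular the product rule $(Hf)'=Hf'+f(0)\delta$ for smooth $f$. One must check that the $\delta$-terms are indexed correctly, so that the top coefficient $a_N$ really multiplies $\delta^{(N-2)}$ with $c_1$. This is routine induction, and I do not expect any real obstacle. A nonconstant $p$ is needed so that the case split is exhaustive.
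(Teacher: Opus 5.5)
Your proof is correct, and it reorganises the paper's argument rather than repeating it. For the `if' direction the paper computes no derivatives. It uses that every solution of $a_0u+a_1u'=0$ is $ce^{\lambda t}$, so $u_1(0)=u_2(0)$ already forces $u_1=u_2$ and the concatenation is trivially a solution. Your argument (no $\delta$-term arises because $c_0=0$) is instead the one the paper uses later for the PDE case, Theorem~\ref{PDE_thm}.

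For the `only if' direction, the paper factors $p$ and splits into two cases, computing the $\delta$-terms by hand in each:
\begin{itemize}
\item a repeated root $\lambda$, with witnesses $e^{\lambda t}$ and $(1+t)e^{\lambda t}$;
\item two distinct roots $\lambda\neq\mu$, with witnesses $e^{\lambda t}$ and $e^{\mu t}$.
\end{itemize}
Your formula $\sum_{k=1}^{n-1}c_k\delta^{(n-1-k)}$ covers both computations at once, with $c_k=k\lambda^{k-1}$, resp.\ $c_k=\mu^k-\lambda^k$. Incidentally, it shows that the last coefficient in the paper's distinct-root formula should be $\mu^{k-1}-\lambda^{k-1}$; this slip does not affect the paper's conclusion. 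Your choice $u_1=0$, $u_2'(0)=1$ with all other initial data zero reduces the obstruction to $\sum_{n=2}^N a_n\delta^{(n-2)}$. So you need no factorisation of $p$ and no case split. The only extra input is solvability of the initial value problem, which is standard, whereas the paper uses only explicit exponential-polynomial solutions.

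Your formula also gives a sharper conclusion. The coefficient of $\delta^{(N-1-j)}$ is $a_Nc_j$ plus terms involving only $c_1,\dots,c_{j-1}$. Hence, for $N\ge 2$, the concatenation of two solutions agreeing at $0$ is a solution if and only if $c_1=\dots=c_{N-1}=0$, i.e.\ if and only if $u_1=u_2$.
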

\begin{proof} (If:) Let $p\!=\!a_0+a_1t$, where $a_0,a_1\in \mC$ and $a_1\!\neq\! 0$. 
The  distributional solution to  $a_0u+a_1\frac{du}{dt}\!=\!0$  
is 
$u\!=\!c e^{\lambda t}$, where $c\in \mC$ is arbitrary, and $\lambda\!:=\!-\frac{a_0}{a_1}$. If $u_1,u_2\in S_p$, then  $u_1\!=\! c_1e^{\lambda t}$ and $u_2\!=\!c_2e^{\lambda t}$ for some $c_1,c_2\in \mC$. If, moreover, $u_1(0)\!=\!u_2(0)$, then $c_1\!=\!c_2$, and so $u_1\!=\!u_2$. Thus $u_1\oamp u_2\!=\!u_1\!=\!u_2 \in S_p$. Hence $S_p$ has the concatenability property. 

\vspace{0.21cm}

\noindent (Only if:) Suppose $S_p$ has the concatenability property. 
Let $\deg p\!>\!1$, and 
$p\!=\!a_0+a_1t+\cdots +a_n t^n$, where $a_0,\cdots, a_n\in \mC$, $n\in \mN$, and $a_n\!\neq\! 0$. 
 
 \vspace{0.1cm}
 
\noindent ${\bf 1^\circ}$ $p$ has a repeated root $\lambda\in \mC$, i.e., $p\!=\!(t-\lambda)^2q$ for some $q\in \mC[t]$. Then $u\!=\!(\alpha +\beta t)e^{\lambda t} \in S_p$ for all $\alpha, \beta \in \mC$. In particular, $u_1\!:=\!e^{\lambda t}\in S_p$ and $u_2\!:=\!(1+t)e^{\lambda t}\in S_p$. Moreover, $u_1(0)\!=\!1\!=\!u_2(0)$. 
Define the locally integrable functions $f_k$, for $k\in \{0,1,2,\cdots, n\}$, by 
\begin{eqnarray*}
\textstyle
 f_0(t)\!\!\!&=&\!\!\!\Big\{ \begin{array}{ll} 
 e^{\lambda t}& \text{if }\; t\! <\! 0,\\
(1+t)e^{\lambda t} & \text{if }\; t\! >\! 0,
\end{array}
\\
f_1(t)\!\!\!&=&\!\!\!\Big\{ \begin{array}{ll} 
 \lambda e^{\lambda t}& \text{if }\; t\! <\! 0,\\
(\lambda + 1+\lambda t)e^{\lambda t} & \text{if }\; t\! >\! 0,
\end{array}
\end{eqnarray*}
and for $k\ge 2$, 
\begin{eqnarray*}
f_k(t)\!\!\!&=&\!\!\!\Big\{ \begin{array}{ll} 
\lambda^k e^{\lambda t}& \text{if }\; t\! <\! 0,\\
\lambda^{k-1}(\lambda+k+\lambda t)e^{\lambda t} & \text{if }\; t\! >\! 0.
\end{array}
\end{eqnarray*}
Using induction on $k$, we get 
 $$
\textstyle 
(\frac{d\hfill}{dt})^k (u_1\oamp u_2)\!=\!f_k +\delta^{(k-2)}+2\lambda\delta^{(k-3)}+3\lambda^2 \delta^{(k-4)}+\cdots +(k-1) \lambda^{k-2} \delta.
$$
(with $\delta^{(m)}$ for $m<0$ defined to be the zero distribution, and $f_k$ is the regular distribution given by $\langle f_k,\varphi\rangle=\int_{\mR} f_k(t)\varphi(t) dt$ for all $\varphi\in \calD(\mR)$). 
As $u_1, u_2\in S_p$, we get that the restriction of the distribution $p(\frac{d\hfill}{dt})f_k$ to the open set $\mR\setminus \{0\}$ is $0$, i.e., $(p(\frac{d\hfill}{dt})f_k)|_{\mR\setminus \{0\}}\!=\!0$. Since $p(\frac{d\hfill}{dt})(u_1\oamp u_2)\!=\!0$, 
$$
\textstyle 
{\scaleobj{0.81}{\sum\limits_{k=0}^n}} a_k( \delta^{(k-2)}+2\lambda\delta^{(k-3)}+3\lambda^2 \delta^{(k-4)}+\cdots +(k-1) \lambda^{k-2} \delta)=0.
$$
By the independence of $\delta, \delta',\delta'', \cdots$ in $\calD'(\mR)$, it follows that each $a_k\!=\!0$ for all $k\in\{2,\cdots, n\}$, i.e., $p\!=\!a_0+a_1t$, a contradiction to $\deg p>1$.

\vspace{0.1cm}

\noindent ${\bf 2^\circ}$ Suppose $p$ has no repeated roots. Let $p$ have roots at $\lambda,\mu\in \mC$, with $\lambda\neq \mu$, i.e., $p\!=\!(t-\lambda)(t-\mu)q$ for some $q\in \mC[t]$. Then  $u_1\!:=\!e^{\lambda t}\in S_p$ and $u_2\!:=\!e^{\mu t}\in S_p$ and $u_1(0)\!=\!1\!=\!u_2(0)$. 
Define the locally integrable functions $f_k$ for $k\!=\!0,1,2,\cdots, n$ by 
$$
\textstyle 
f_k(t)=\bigg\{ \begin{array}{ll} 
\lambda^k e^{\lambda t}& \text{ if } \;t\!<\!0,\\
\mu^k e^{\mu t} & \text{ if }\; t\!>\!0.
\end{array}
$$
Then 
 $
\textstyle 
(\frac{d\hfill}{dt})^k (u_1\oamp\;\! u_2)\!=\!f_k + (\mu-\lambda)\delta^{(k-2)}+ \cdots+(\mu^{k-2}-\lambda^{k-2})\delta.
$ 
Since $u_1, u_2\in S_p$, we get $(p(\frac{d\hfill}{dt})f_k)|_{\mR\setminus \{0\}}\!=\!0$. As $p(\frac{d\hfill}{dt})(u_1\oamp u_2)\!=\!0$, we have
$$
\textstyle 
{\scaleobj{0.81}{\sum\limits_{k=0}^n}} a_k((\mu-\lambda)\delta^{(k-2)}+ \cdots+(\mu^{k-2}-\lambda^{k-2})\delta)=0.
$$
By the independence of $\delta, \delta',\delta'', \cdots$ in $\calD'(\mR)$, it follows that each $a_k\!=\!0$, $k\!\in \{2,\cdots, n\}$, i.e., $p\!=\!a_0+a_1t$, a contradiction to $\deg p>1$. 
\end{proof}

\section{Vector-valued functions and distributions}
\label{section_2}

\noindent For preliminaries on  vector-valued distributions, we refer the reader to \cite[Chapter~1, \S11]{Car}.

Let $\calD'(\mR^d)$ denote the space of all complex-valued distributions on $\mR^d$. The space of  test functions on $\mR^d$ is denoted by $\calD(\mR^d)$ (that is, $\calD(\mR^d)$ is the set of all compactly supported complex-valued $C^\infty$ functions on $\mR^d$). For a  distribution $T\in \calD'(\mR^d)$ and a test function $\psi\in \calD(\mR^d)$, the action of $T$ on $\psi$ will be denoted by $\langle T,\psi\rangle$. 

We now consider  maps $t\mapsto u(t)$, where $u(t)\in\calD'(\mR^d)$, whose differential calculus (see the `Jump Rule', Proposition~\ref{prop_jump} below)  will play an important role in the sequel. 
Let $I\subset \mR$ be an open set. 
A function $u: I\to \calD'(\mR^d)$ is {\em locally integrable} (respectively {\em continuous}, respectively $C^k$ for some $k\in \mN$) if for all $\psi \in \calD(\mR^d)$, the function $t\mapsto \langle u(t),\psi\rangle: I\to \mC$ is locally integrable (respectively {\em continuous}, respectively $C^k$), and we write $u\in \text{L}^1_{\text{loc}}(I, \calD'(\mR^d))$ (respectively $C(I, \calD'(\mR^d))$, respectively 
$C^k(I, \calD'(\mR^d))$). If $u\in C^k$ for all $k\in \mN$, we write $u\in C^\infty(I, \calD'(\mR^d))$.
If $u\in C^1(I, \calD'(\mR^d))$, then we define its derivative $\frac{du}{dt}$ by
$$
\textstyle 
\langle \frac{du}{dt}(t), \psi\rangle =
\frac{d\hfill}{dt} \langle u(t), \psi\rangle 
=
\lim\limits_{h\to 0} \langle \frac{u(t+h)-u(t)}{h}, \psi\rangle \text{ for all }\psi \in \calD(\mR^d).
$$
Taking $h$ as a sequence, for example $\frac{1}{n}$, $n\in \mN$, we see that $\frac{du}{dt}(t)$ is the pointwise limit of a sequence of distributions in $\calD'(\mR^d)$, from which it follows using the Banach-Steinhaus theorem in the setting of the barelled space $\calD(\mR^d)$, that $\frac{du}{dt}(t)\in \calD'(\mR^d)$.

The space of continuous linear maps from $\calD(\mR)$ to 
$\calD'(\mR^d)$ will be denoted by $\calL(\calD(\mR),\calD'(\mR^d))$. 
For a locally integrable $u\in \text{L}^1_{\text{loc}}(I, \calD'(\mR^d))$,  we define $T_{u} \in \calL(\calD(\mR),\calD'(\mR^d))$, called {\em the regular distribution corresponding to $u$}, by 
$$
\textstyle 
\langle \langle T_{u}, \varphi \rangle , \psi\rangle  
:=
\int_\mR \varphi(t) \langle u(t), \psi\rangle \;\!dt 
\;\text{ for all }\varphi \in \calD(\mR) \text{ and } \psi \in\calD(\mR^d).
$$ 
If $\sigma\in \calD'(\mR^d)$, then we define $\delta\otimes \sigma \in \calL(\calD(\mR),\calD'(\mR^d))$ by 
$$
\textstyle 
\langle \langle \delta\otimes \sigma, \varphi \rangle , \psi\rangle  
:=
\varphi(0) \langle \sigma,\psi\rangle \;
\text{ for all }\varphi \in \calD(\mR)\text{ and }\psi \in\calD(\mR^d).
$$ 
For $u\in \calL(\calD(\mR),\calD'(\mR^d))$, the derivative $\frac{d u}{dt}\in \calL(\calD(\mR),\calD'(\mR^d))$ is 
defined by 
$$
\textstyle 
\langle \langle u, \varphi \rangle , \psi \rangle 
:=
-\langle \langle u, \varphi'\rangle , \psi \rangle 
\;\text{ for all }\varphi \in \calD(\mR)\text{ and }\psi \in\calD(\mR^d).
$$ 
We now show a `jump rule' allowing differentiation of vector-valued functions in the sense of distributions. This is reminiscent of the jump rule for distributions of one variable; see e.g. \cite[Example~3.1.4, p.28]{Kun}.

\goodbreak 

\begin{proposition}[Jump Rule]
\label{prop_jump}
$\;$

\noindent Let $u\in C^1(\mR\setminus \{0\},\calD'(\mR^d))$  be such that 
$$
\textstyle 
\begin{array}{llll}
u(0+):=\lim\limits_{t\searrow 0} u(t),&\quad
u(0-):=\lim\limits_{t\nearrow 0} u(t),\;\\[0.39cm]
\frac{du}{dt} (0+):=\lim\limits_{t\searrow 0} \frac{du}{dt}(t),&\quad
\frac{du}{dt}(0-):=\lim\limits_{t\nearrow 0} \frac{du}{dt}(t)
\end{array}
$$
exist in $\calD'(\mR^d)$. Let $\sigma:=u(0+)-u(0-)\in \calD'(\mR^d)$. 

\smallskip 

\noindent Then $u, \frac{du}{dt}\in \text{\em L}^1_{\text{\em loc}}(\mR, \calD'(\mR^d))$, and 
 $\;
\textstyle 
\frac{d T_{u}}{dt\hfill }=T_{\frac{du}{dt}} +\delta\otimes \sigma.
$ 
\end{proposition}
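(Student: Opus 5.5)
The plan is to reduce everything to the scalar jump rule by fixing a test function $\psi\in\calD(\mR^d)$. For fixed $\psi$, set $g_\psi(t):=\langle u(t),\psi\rangle$ for $t\neq 0$. By hypothesis $g_\psi\in C^1(\mR\setminus\{0\})$, with $g_\psi'(t)=\langle \frac{du}{dt}(t),\psi\rangle$. Moreover the four one-sided limits $g_\psi(0\pm)=\langle u(0\pm),\psi\rangle$ and $g_\psi'(0\pm)=\langle \frac{du}{dt}(0\pm),\psi\rangle$ exist, since convergence in $\calD'(\mR^d)$ (pointwise on test functions) gives convergence after pairing with $\psi$.

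First I establish local integrability. On $(0,\infty)$ the continuous function $g_\psi$ has a finite limit at $0$, so it extends continuously to $[0,\infty)$; likewise on $(-\infty,0]$. Thus $g_\psi$ is piecewise continuous with a single jump, hence bounded on compacts and locally integrable. The same argument applies to $g_\psi'$. This gives $u,\frac{du}{dt}\in \text{L}^1_{\text{loc}}(\mR,\calD'(\mR^d))$ (defining $u(0)$ arbitrarily, which is irrelevant for integrability), so $T_u$ and $T_{\frac{du}{dt}}$ are defined. One should also check that they lie in $\calL(\calD(\mR),\calD'(\mR^d))$. The map $\psi\mapsto\int\varphi(t)\langle u(t),\psi\rangle dt$ is linear, and it is a distribution because it is a pointwise limit of Riemann-type sums of distributions, so Banach--Steinhaus applies as in the derivative discussion above. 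Continuity in $\varphi$ follows from the $\text{L}^1_{\text{loc}}$ bound. I expect this bookkeeping about continuity to be the main technical nuisance, though it is not conceptually hard.

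For the main identity, fix $\varphi\in\calD(\mR)$ with support in $[-R,R]$. By the definition of the derivative,
$$\textstyle \langle\langle \frac{dT_u}{dt},\varphi\rangle,\psi\rangle=-\int_\mR\varphi'(t)g_\psi(t)\,dt=-\int_{-R}^{0}\varphi' g_\psi\,dt-\int_{0}^{R}\varphi' g_\psi\,dt.$$
On each half-line I integrate by parts on $[\epsilon,R]$ and $[-R,-\epsilon]$, where $g_\psi$ is $C^1$, and then let $\epsilon\searrow 0$, using the existence of the limits $g_\psi(0\pm)$. This yields
$$\textstyle \int_\mR\varphi(t)g_\psi'(t)\,dt+\varphi(0)\bigl(g_\psi(0+)-g_\psi(0-)\bigr).$$
The boundary terms at $\pm R$ vanish because $\varphi$ vanishes there. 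The limit of the integrals is justified by the boundedness of $g_\psi'$ near $0$, which comes from the existence of $g_\psi'(0\pm)$. The last expression equals $\langle\langle T_{\frac{du}{dt}},\varphi\rangle,\psi\rangle+\varphi(0)\langle\sigma,\psi\rangle$.

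Since $\varphi$ and $\psi$ were arbitrary, this proves $\frac{dT_u}{dt}=T_{\frac{du}{dt}}+\delta\otimes\sigma$.
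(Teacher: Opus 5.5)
Your proposal is correct and follows the paper's own proof: pair with a fixed $\psi\in\calD(\mR^d)$ to reduce to the scalar case, split $-\int\varphi' g_\psi\,dt$ at $0$, integrate by parts on each half-line, and collect the boundary term $\varphi(0)\bigl(g_\psi(0+)-g_\psi(0-)\bigr)=\varphi(0)\langle\sigma,\psi\rangle$. The paper leaves implicit the points you handle explicitly, namely the $\epsilon$-cutoff (it integrates by parts directly on $[-a,0]$ and $[0,a]$) and the Banach--Steinhaus check that $T_u$ takes values in $\calD'(\mR^d)$; these are added rigor, not a different route.
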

\begin{proof} That $u, \frac{du}{dt}\in \text{L}^1_{\text{loc}}(\mR, \calD'(\mR^d))$ is clear from the definition. It remains to show that $\frac{d T_{u}}{dt\hfill }=T_{\frac{du}{dt}} +\delta\otimes \sigma$. We first show this in the scalar case, when $u$ takes complex values. Let $\varphi \in \calD(\mR)$, and suppose that $\varphi$
  is $0$ outside $[-a, a]$,  $a>0$. Then
$$
\textstyle 
\begin{array}{rcl}
\langle \frac{dT_u}{dt\hfill},\varphi \rangle 
\!\!\!&=&\!\!\!
-\langle T_u,\varphi' \rangle 
=
-\int_{-a}^a u(t)\varphi'(t) \;\!dt 
\\[0.15cm]
\!\!\!&=&\!\!\!-\int_{-a}^0 u(t)\varphi'(t)\;\! dt -\int_0^a u(t)\varphi'(t) \;\!dt 
\\[0.15cm]
\!\!\!&=&\!\!\! \int_{-a}^0 u'(t)\varphi(t) \;\!dt-u(0-)\varphi(0) 
\\[0.15cm]
\!\!\!&&\!\!\!+\int_0^a u'(t)\varphi(t) \;\!dt+u(0+)\varphi(0) \\[0.15cm]
\!\!\!&=&\!\!\! \int_{-a}^a u'(t)\varphi(t) \;\!dt+(u(0+)-u(0-))\varphi (0)\\[0.15cm]
\!\!\!&=&\!\!\! \langle T_{u'},\varphi\rangle+(u(0+)-u(0-))\langle \delta,\varphi
\rangle 
\\[0.15cm]
\!\!\!&=&\!\!\!\langle T_{u'}+ (u(0+)-u(0-))\delta ,\varphi\rangle.
\end{array}
$$
The general case follows from the scalar case applied to the function $t\mapsto \langle u(t), \psi\rangle$, $\psi\in \calD(\mR^d)$, noting that it has the derivative $\langle \frac{du}{dt}(t),\psi\rangle $ outside $0$, and has the jump $\langle \sigma, \psi\rangle$ at $0$.
\end{proof}

\section{The partial differential equation case}
\label{section_3}

\noindent Every polynomial $p\in \mC[x_1,\cdots,x_d][t]$ gives a natural differential operator $D_p:\calL(\calD(\mR),\calD'(\mR^d))\to 
\calL(\calD(\mR),\calD'(\mR^d))$ as follows: For a polynomial $p=a_0+a_1t+\cdots+a_n t^n$, where $a_0,\cdots, a_n\in 
 \mC[x_1,\cdots,x_d]$,  $D_p$ is given by 
$$
\textstyle 
D_p u 
= a_0(\frac{\partial\hfill}{\partial x_1}, \cdots, \frac{\partial\hfill}{\partial x_d}) u
+a_1(\frac{\partial\hfill}{\partial x_1}, \cdots, \frac{\partial\hfill}{\partial x_d}) \frac{du}{dt}
+\cdots+a_n(\frac{\partial\hfill}{\partial x_1}, \cdots, \frac{\partial\hfill}{\partial x_d}) \frac{d^nu}{dt^n},
$$
for all $u\in \calL(\calD(\mR),\calD'(\mR^d))$. 

For $p\in \mC[x_1,\cdots,x_d][t]$, let $S_p$ denote the set of continuously differentiable solutions, that is, 
$$
\textstyle
S_p=\{ u\in C(\mR, \calD'(\mR^d)): D_p u=0\text{ in the sense of distributions}\}.
$$
Here, $D_p u=0$ in the sense of distributions means $D_p T_u=0$, where $T_u$ denotes the regular distribution corresponding to $u\in \text{L}^1_{\text{loc}}(\mR, \calD'(\mR^d))$. 

If $u_1,u_2\in S_p\cap C^1(\mR,\calD'(\mR^d)))$ and $u_1(0)\!=\!u_2(0)\in \calD'(\mR^d)$, then we define their {\em concatenation} 
$u_1\oamp  u_2 \in C(\mR,\calD'(\mR^d))$ by 
$$
\textstyle 
(u_1\oamp u_2)(t)=\Big\{\! \begin{array}{l} 
u_1(t)  \;\text{ if }\; t\!\le\! 0,\\
u_2(t) \;\text{ if } \; t\! \ge\!  0.
\end{array}
$$
We say that the set $S_p$ has the {\em concatenability property} if whenever $u_1,u_2\in S_p\cap C^1(\mR,\calD'(\mR^d))$, their concatenation $u_1\oamp  u_2$ belongs to $S_p$. 

For a polynomial $p=a_0+a_1t+\cdots+a_n t^n\in 
  \mC[x_1,\cdots,x_d][t]$, where $n\in \mN\cup\{0\}$, $a_0,\cdots, a_n\in 
 \mC[x_1,\cdots,x_d]$,  and $a_n\neq 0$, we call $n$ the {\em $t$-degree of $p$}.

\begin{theorem}
\label{PDE_thm}
Let $p\in \mC[x_1,\cdots,x_d][t]$ have $t$-degree $n\ge 1$. 

\noindent Then $S_p$ has the concatenability property if and only if $n=1$.
\end{theorem}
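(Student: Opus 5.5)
The plan is to handle the two directions separately. For $n=1$ we use the Jump Rule (Proposition~\ref{prop_jump}) directly. For $n\ge 2$ we reduce to the ordinary differential equation case of Theorem~\ref{one_variable_thm} by using exponential solutions in the space variables.

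\emph{(If.)} Let $p=a_0+a_1t$ with $a_1\neq 0$, let $u_1,u_2\in S_p\cap C^1(\mR,\calD'(\mR^d))$ satisfy $u_1(0)=u_2(0)$, and put $w:=u_1\oamp u_2$. Then $w\in C^1(\mR\setminus\{0\},\calD'(\mR^d))$. The one-sided limits of $w$ at $0$ exist, and so do those of $\frac{dw}{dt}$, namely $\frac{du_1}{dt}(0)$ and $\frac{du_2}{dt}(0)$. The jump is $\sigma=u_2(0)-u_1(0)=0$. Proposition~\ref{prop_jump} therefore gives $\frac{dT_w}{dt}=T_{\frac{dw}{dt}}$.

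Spatial differential operators commute with $T$: by the definitions, $a(\frac{\partial}{\partial x})T_v=T_{a(\frac{\partial}{\partial x})v}$, where the right side acts pointwise in $t$. Hence
$$
D_pT_w=T_{a_0(\frac{\partial}{\partial x})w+a_1(\frac{\partial}{\partial x})\frac{dw}{dt}}.
$$
For a $C^1$ solution $u_i$, the same identity shows $T_{a_0 u_i+a_1 u_i'}=0$. Its integrand $t\mapsto\langle a_0u_i(t)+a_1u_i'(t),\psi\rangle$ is continuous, so it vanishes identically for every $\psi$. Consequently the integrand defining $D_pT_w$ vanishes for all $t\neq 0$, and so $D_pT_w=0$, i.e.\ $w\in S_p$.

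\emph{(Only if.)} Suppose $n\ge 2$. Since $a_n\neq 0$ is a nonzero polynomial, we can choose $\zeta\in\mC^d$ with $a_n(\zeta)\neq 0$. Then $q(\tau):=p(\zeta,\tau)\in\mC[\tau]$ has degree $n\ge 2$.

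For any $g\in C^\infty(\mR)$, consider $u(t):=g(t)\,e^{\zeta\cdot x}$, which is a smooth function of $x$ and hence lies in $\calD'(\mR^d)$. Since $\frac{\partial}{\partial x_j}$ acts on $e^{\zeta\cdot x}$ as multiplication by $\zeta_j$, for such separated $u$ (and for piecewise versions, via the iterated Jump Rule) we get
$$
\langle\langle D_pT_u,\varphi\rangle,\psi\rangle=\langle q(\tfrac{d}{dt})g,\varphi\rangle\,\langle e^{\zeta\cdot x},\psi\rangle .
$$
More precisely, $D_pT_u$ equals $q(\frac{d}{dt})g$ tensored with $e^{\zeta\cdot x}$. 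In particular, $u\in S_p$ whenever $g$ solves $q(\frac{d}{dt})g=0$.

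Now split into the same two cases as in the proof of Theorem~\ref{one_variable_thm}.
\begin{itemize}
\item If $q$ has a repeated root $\lambda$, take $g_1=e^{\lambda t}$ and $g_2=(1+t)e^{\lambda t}$.
\item Otherwise $q$ has two distinct roots $\lambda\neq\mu$; take $g_1=e^{\lambda t}$ and $g_2=e^{\mu t}$.
\end{itemize}
In both cases $u_i=g_i e^{\zeta\cdot x}\in S_p\cap C^1$, $u_1(0)=u_2(0)$, and $u_1\oamp u_2=(g_1\oamp g_2)e^{\zeta\cdot x}$. Therefore
$$
D_pT_{u_1\oamp u_2}=\bigl(q(\tfrac{d}{dt})(g_1\oamp g_2)\bigr)\otimes e^{\zeta\cdot x}.
$$
The computation in the proof of Theorem~\ref{one_variable_thm} shows that $q(\frac{d}{dt})(g_1\oamp g_2)$ is the finite combination of $\delta,\delta',\dots$ displayed there, with coefficients $a_k(\zeta)$. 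If this combination were zero, then by linear independence of the $\delta^{(m)}$ all $a_k(\zeta)$ with $k\ge 2$ would vanish, contradicting $a_n(\zeta)\neq 0$. Evaluating at $\varphi,\psi$ with $\langle e^{\zeta\cdot x},\psi\rangle\neq 0$ then shows $D_pT_{u_1\oamp u_2}\neq 0$. So concatenability fails.

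The main obstacle I expect is the bookkeeping that justifies the tensor factorization of $D_pT_{u_1\oamp u_2}$. This requires iterating Proposition~\ref{prop_jump} $n$ times, where each derivative has one-sided limits (true here because everything is of the form (smooth in $t$)$\,\times e^{\zeta\cdot x}$). It also requires checking that the jumps of $\frac{d^k w}{dt^k}$ produce exactly the $\delta$-terms of the scalar case tensored with $e^{\zeta\cdot x}$. Testing against $\psi$ reduces all of this to the scalar jump rule, which I would use throughout.
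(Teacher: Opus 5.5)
Your proof is correct and follows essentially the same route as the paper. For $n=1$ you use the jump rule with zero jump; for $n\ge 2$ you reduce to Theorem~\ref{one_variable_thm} via solutions $g(t)e^{\zeta\cdot x}$ — the paper transfers the concatenability property to $S_{p_\xi}$ and cites that theorem, whereas you insert its explicit counterexample pairs, which amounts to the same thing. The tensor factorization you flag as the main obstacle is in fact immediate, since $\langle\langle T_{g e^{\zeta\cdot x}},\varphi\rangle,\psi\rangle=\langle g,\varphi\rangle\langle e^{\zeta\cdot x},\psi\rangle$ and all $t$-derivatives fall on $\varphi$, so no iterated jump rule is needed.
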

\begin{proof} (If part:) Let $p\!=\!a_0+a_1t$, where $a_0,a_1\in \mC[x_1,\cdots, x_d]$ and $a_1\!\neq \!0$. 
Let $u_1, u_2\in S_p\cap C^1(\mR,\calD'(\mR^d)))$ be such that $u_1(0)\!=\!u_2(0)$. We denote $u_1 \oamp u_2$ by $u$ in order to avoid notational clutter. We have  
$$
\textstyle 
\begin{array}{llll}
u(0+)\!=\!\lim\limits_{t\searrow 0} u_2(t)\!=\!u_2(0),&\quad
u(0-)\!=\!\lim\limits_{t\nearrow 0} u_1(t)\!=\!u_1(0),\;\\[0.39cm]
\frac{du}{dt} (0+)\!=\!\lim\limits_{t\searrow 0} \frac{du_2}{dt\hfill}(t)\!=\!\frac{du_2}{dt\hfill}(0),&\quad
\frac{du}{dt}(0-)\!=\!\lim\limits_{t\nearrow 0} \frac{du_1}{dt\hfill}(t)\!=\!\frac{du_1}{dt\hfill}(0).
\end{array}
$$
By the jump rule (Proposition~\ref{prop_jump}), we have 
$$
\textstyle 
\frac{dT_{u}}{dt\hfill}
=
T_{\frac{du}{dt}} +\delta \otimes (u_2(0)-u_1(0))
=
T_{\frac{du}{dt}} +\delta \otimes 0
=
T_{\frac{du}{dt}}.
$$
Hence $D_p T_u= T_{D_p u}$. But $D_p u_1=0=D_p u_2\in C(\mR, \calD'(\mR^d))$. So for all $\varphi \in \calD(\mR)$ and $\psi \in \calD(\mR^d)$, we have 
$$
\textstyle 
\begin{array}{rcl}
\langle \langle D_p T_u, \varphi \rangle, \psi \rangle 
\!\!\!&=&\!\!\!
\langle \langle T_{D_pu}, \varphi \rangle, \psi \rangle
=
\int_\mR \varphi (t) \langle (D_pu)(t), \psi \rangle \;\!dt 
\\[0.21cm]
\!\!\!&=&\!\!\!
\int_{-\infty}^0 \varphi (t) \langle (D_pu_1)(t), \psi \rangle \;\!dt 
+
\int_0^{\infty} \varphi (t) \langle (D_pu_2)(t), \psi \rangle \;\!dt 
\\[0.21cm]
\!\!\!&=&\!\!\!
\int_{-\infty}^0 \varphi (t) \langle 0, \psi \rangle \;\!dt 
+
\int_0^{\infty} \varphi (t) \langle 0, \psi \rangle \;\!dt 
=0+0=0.
\end{array}
$$
Thus $u_1 \oamp u_2=u\in S_p$. Hence $S_p$ has the concatenability property. 

\vspace{0.21cm}

\noindent (Only if part:) Suppose that $S_p$ has the concatenability property, the integer $n$ is strictly bigger than $1$,  $p\!=\!a_0+a_1t+\cdots+a_n t^n\in 
  \mC[x_1,\cdots,x_d][t]$, where $a_0,\cdots, a_n\in 
 \mC[x_1,\cdots,x_d]$,  and $a_n\!\neq \!0$. Let $\xi \in \mR^d$ be such that $a_n(\xi)\neq 0$. 
 Let $p_{\xi}:=a_0(\xi)+a_1(\xi) t+\cdots +a_n(\xi)t^n\in \mC[t]$. Then $p_{\xi}$ has degree $n>1$. Let $\widetilde{u}_1,\widetilde{u}_2\in S_{p_\xi}\cap C^1(\mR)$ and  $\widetilde{u}_1(0)=\widetilde{u}_2(0)$. Then $p_{\xi}(\frac{d\hfill}{dt})\widetilde{u}_1=0$ and $p_{\xi}(\frac{d\hfill}{dt})\widetilde{u}_2=0$. Define $u_1,u_2\in C^1(\mR, \calD'(\mR^d))$ by $u_i(t)=\widetilde{u}_i(t) e^{\xi\cdot x} $, $i\in \{1,2\}$. Here $\xi\cdot x$ denotes the usual Euclidean inner product of $\xi$ and $x$ in $\mR^d$. Then $u_1,u_2\in S_p$ because 
 $$
 \textstyle 
 D_p u_i =p_{\xi}(\frac{d\hfill}{dt})\widetilde{u}_i e^{\xi\cdot x}=0e^{\xi\cdot x}=0, \quad i\in \{1,2\}.
 $$
 Also $u_1(0)=\widetilde{u}_1(0)e^{\xi\cdot x} =\widetilde{u}_2(0)e^{\xi\cdot x} =u_2(0)$. 
Since $S_p$ has the concatenability property, $u\!:=\!u_1 \oamp u_2\in S_p$. But $(u_1 \oamp u_2)(t)=(\widetilde{u}_1\oamp\widetilde{u}_2)(t)e^{\xi\cdot x}$. Denote $\widetilde{u}_1\oamp\widetilde{u}_2$ by $\widetilde{u}$. 
So we have 
$$
\textstyle 
0=D_p T_u = p_{\xi}(\frac{d\hfill}{dt}) T_{\widetilde{u}} e^{\xi\cdot x},
$$
giving $\widetilde{u}_1\oamp\widetilde{u}_2=\widetilde{u}\in S_{p_{\xi}}$. Thus $S_{p_{\xi}}$ has the concatenability property. This implies by Theorem~\ref{one_variable_thm} that the degree of $p_{\xi}$ must be one, a contradiction. Hence it is impossible that $n>1$, and so $n=1$. 
 \end{proof}

\begin{remark}
Instead of considering the spatial values to be distributions in $\calD'(\mR^d)$, one can also consider 
$S_p$ to comprise those $u$ whose spatial values lie in the smaller space $\calS'(\mR^d)$ of tempered distributions on $\mR^d$. Thus 
 $$
\textstyle
S_p=\{ u\in C(\mR, \calS'(\mR^d)): D_p u=0\text{ in the sense of distributions}\}.
$$ 
Then Theorem~\ref{PDE_thm} continues to be true. The proof is the same, mutatis mutandis, 
except that in the `only if' part, we set $u_i=\widetilde{u}_i e^{i \xi\cdot x} $ (i.e., the spatial factor $e^{i \xi\cdot x}$ bears an `$i$' in the exponential, to ensure that it is an element of $\calS'(\mR^d)$). 

A similar result is also true for spatial profiles belonging to periodic distributions (see \cite{Sas}) with analogous changes made in the proof. 
\end{remark}

\smallskip 

\noindent {\bf Funding.} No external funding was received. 

\smallskip

\noindent {\bf Data Availability Statement}. Data sharing is not applicable to this article as no new
data were created or analysed in this study.

\smallskip

\noindent {\bf Declarations
Conflict of interest}. The authors declare that they have no conflicts of interest to this
work.

\smallskip

\noindent {\bf Acknowledgements.} The authors thank the two anonymous referees for their careful reading of the article, and useful comments.


\begin{thebibliography}{99}
 
\bibitem{Car}
R. Carroll. 
{\em Abstract Methods in Partial Differential Equations}. 
Harper's Series in Modern Mathematics, Harper and Row, 1969.
  
\bibitem{CotSas}
A. Sasane and T. Cotroneo. 
Conditions for time-controllability of behaviours. 
{\em International Journal of Control}, 75:61-67, no. 1, 2002.
    
 \bibitem{Hor0}
 L. H\"{o}rmander. 
 On the division of distributions by polynomials. 
   {\em Arkiv f\"{o}r Matematik}, 3:555-568, 1958.
    
 \bibitem{Hor}
L. H\"{o}rmander. 
Null solutions of partial differential equations. 
{\em Archive for Rational Mechanics and Analysis}, 4:255-261, 1960.
     
          
\bibitem{Kun}
M. Kunzinger. 
{\em Theory of Distributions}. Lecture notes, Universit\"at Wien, 2019.
     
 \bibitem{Loj}
 S. \L ojasiewicz. 
 Sur le probl\`eme de la division. 
 {\em Studia Mathematica}, 18:87-136, 1959.
    
    \bibitem{NapRapRoc}
    D. Napp, P. Rapisarda, and P. Rocha. 
   Time-relevant stability of $2$D systems. 
   {\em Automatica}, 47:2373-2382, no. 11, 2011.

 \bibitem{ObeSch}
 U. Oberst and M. Scheicher. 
 Time-autonomy and time-controllability of discrete
              multidimensional behaviours. 
              {\em International Journal of Control},
    85:990-1009, no. 8, 2012.
         
    
    
\bibitem{RapWil}
P. Rapisarda and J. Willems. 
State maps for linear systems. 
{\em SIAM Journal on Control and Optimization},
   35:1053-1091, no. 3, 1997.
   
 \bibitem{RocWil}
P. Rocha and J. Willems. 
Markovian properties for $2$D behavioral systems described by
              PDE's: the scalar case. 
              {\em Multidimensional Systems and Signal Processing},
    22:45-53, no. 1-3, 
      2011.   
        
        \bibitem{Sas00}
        A. Sasane. 
        On the Willems closure with respect to ${\mathscr{W}}_{\mathfrak{s}}$. 
   {\em IMA Journal of Mathematical Control and Information},
   20:217-232, no. 2, 2003.
      
        \bibitem{Sas0}
        A. Sasane. 
        Time-autonomy and time-controllability of $2$-D behaviours that are tempered in the spatial direction. {\em Multidimensional Systems and Signal Processing},15:97-116, 2004.
        
 \bibitem{Sas}
 A.  Sasane. 
 On the existence of spatially tempered null solutions to
              linear constant coefficient PDEs. 
 {\em Israel Journal of Mathematics}, 244:273-291, no. 1, 2021.
   
   \bibitem{SasThoWil}
   A. Sasane, E. Thomas, and J. Willems. 
     Time-autonomy versus time-controllability. 
      {\em Systems and Control Letters}, 45:145-153, no. 2, 2002.
    
     
\bibitem{SasWag}
A. Sasane and P. Wagner. 
Division problem for spatially periodic distributions. 
{\em Journal of Mathematical Analysis and Applications}, 
408:70-75, no. 1, 2013.
 
 \bibitem{vanRap}
 A.  van der Schaft and P. Rapisarda. 
 State maps from integration by parts. 
 {\em SIAM Journal on Control and Optimization},
   49:2415-2439, no. 6, 2011.
      
          
\bibitem{Wil}
J. Willems. 
State and first order representations. 
   In {\em Unsolved Problems in Mathematical Systems and Control Theory}, V. Blondel, A. Megretski (editors), Princeton University Press, 2004.
    
\bibitem{Zer}
E. Zerz. 
First-order representations of discrete linear
              multidimensional systems. 
              {\em Multidimensional Systems and Signal Processing}, 11:359-380, no. 4, 2000.
              
    \vspace{0.45cm}
   
\end{thebibliography}
\end{document}